\newcommand{\noun}[1]{\textsc{#1}}
\providecommand{\tabularnewline}{\\}
\newenvironment{lyxcode}
{\par\begin{list}{}{
\setlength{\rightmargin}{\leftmargin}
\setlength{\listparindent}{.5pt}
\raggedright
\setlength{\itemsep}{.5pt}
\setlength{\parsep}{.5pt}
\normalfont\ttfamily}%
 \item[]}
{\end{list}}
\definecolor{green}{rgb}{0.2,0.6,0.4}
\definecolor{blue}{rgb}{0.2,0.2,0.7}
\definecolor{red}{rgb}{0.7,0.2,0.2}
\newtheorem{theorem}{Theorem}[section]
\newtheorem{proposition}[theorem]{Proposition}
\newtheorem{lemma}[theorem]{Lemma}
\newtheorem{definition}[theorem]{Definition}
\theoremstyle{plain}
\title{Efficient algorithms computing distances between Radon measures on
$\mathbb{R}$}
\author{J\c{e}drzej Jab{\l}o{\'n}ski$^{1}$ and Anna Marciniak-Czochra$^{2}$ \\ \\
{$^{1}$ \small \it Institute of Applied Mathematics and Mechanics, } \\
{\it \small University of Warsaw} \\
{\it \small Warszawa 02-097, Poland}\\ \\
{$^{2}$ \small \it{Institute of Applied Mathematics}},\\
{\it \small {Interdisciplinary Center for Scientific Computing (IWR)}},\\ 
{\it \small {and BIOQUANT Center}}\\
{\it \small {University of  Heidelberg}}\\
{\it \small {Im Neuenheimer Feld 294, 69120  Heidelberg, Germany}}}
\begin{document}
\maketitle
\begin{abstract}
In this paper numerical methods of computing distances between two
Radon measures on $\mathbb R$ are discussed. Efficient algorithms for Wasserstein-type
metrics are provided. In particular, we propose a novel algorithm to compute the flat metric (bounded Lipschitz distance) with a computational cost $O(n\log n)$. The flat distance has recently proven to be adequate for the Escalator Boxcar Train (EBT) method for solving transport equations with growth terms. Therefore, finding efficient numerical algorithms to compute the flat distance between two measures is important for finding the residual error and validating empirical convergence of different methods.

{{\bf Keywords:} metric spaces, flat metric, Wasserstein distance, Radon measures, optimal transport, linear programming, minimum-cost flow}
\end{abstract}

\section{Introduction}
Recent years witnessed large developments in the kinetic theory methods applied to mathematical physics and more recently also to mathematical biology.  Among important branches of the kinetic theory are optimal transportation problems  and related to them Wasserstein metrics, and Monge-Kantorovich metrics \cite{ambrosio,villani}.  
Partial differential equations in metric spaces are being applied to transportation problems \cite{Kinderlehrer, villani},  gradient flows \cite{ambrosio,Westdickenber} and structured population models \cite{CCGU,GLMC,GMC,two_sex}. 
 
Output of mathematical modeling can often be described as Radon measures. Comparing the results of the models requires then a definition of distance in the space of measures. The desired properties of such metrics depend on the structure of the considered problem. In most of the cases, the topology of total variation is too strong for applications, and weaker metrics had to be introduced, see \cite{GMC} for details. Well known $1$-Wasserstein metric, on the other hand, is only applicable to processes with mass conservation. To cope with growth in the process, various modifications have been proposed, including  flat metric and centralized Wasserstein metric. In the present paper, we additionally introduce a normalized Wasserstein distance. For comparison of different metrics, their interpretation and examples see Table \ref{Table}.  Even though, all those distances can be computed using linear programming (LP), its computational complexity becomes often larger than the complexity of solving the original problem. For example, the equations for which the stability of numerical algorithm is proven in $(W^{1,\infty})^*$ require an efficient algorithm for the flat metric to find the residual error.

Algorithms proposed in this paper are designed to compute efficiently Wasserstein-type distance between two Radon measures in the form of $\sum_{i=1}^{n}a_{i}\delta_{x_{i}}$,
where $\delta$ is the Dirac delta. The algorithms can be also applied for the case of an arbitrary pair of measures, by approximating those measures by the sum of Dirac deltas. 
In the process of finding numerical solutions to partial differential equations, the distance between a discrete and absolutely continuous measures is needed to evaluate the quality of the initial condition approximation, while the distance between two discrete measure is needed to find the residual error.

The paper is organized as follows. In Section \ref{Metrics} we introduce and compare four Wasserstein-type metrics. Additionally, we introduce a two-argument function, based on the Wasserstein distance, which is not a metric, however, provides a good tool to estimate the flat metric from above.  Section \ref{Algorithms} is devoted to numerical algorithms proposed to calculate distances between two measures in respect to the considered metrics. We justify the algorithms and provide respective pseudocodes. The novelty of this paper is the algorithm for flat metric, which has been recently proven to be adequate for the Escalator Boxcar Train (EBT) method for solving transport equations \cite{EBT}. We propose an efficient algorithm which computation cost is $O(n^2)$ and optimize it further to $O(n\log n)$. To judge the efficiency of the algorithms, we compare the times needed to compute the flat distance between two sums of a large number of Dirac deltas randomly distributed over [-1,1]. 

\section{Metrics on the spaces of Radon measures} \label{Metrics}

\subsection{1-Wasserstein distance}

The framework of Wasserstein metric in the spaces of probability measures has proven to be very useful for the analysis of equations given in a conservative form, such as, for example, transport equation \cite{ambrosio}, Fokker-Planck \cite{Fokker-Planck} or nonlinear diffusion  equation \cite{porus_media}.
 It was originally defined using the notion of optimal transportation,  but we focus on its dual representation.

\begin{definition}
$1$-Wasserstein distance between two measures 
$\mu$ and $\nu$ is given by

\begin{equation}\label{Wassersteinm}
W_{1}(\mu,\nu)=\sup\left\{ \left|\int_{\mathbb{R}}f(x)d(\mu-\nu)(x)\right|:f\in C(\mathbb{R},\mathbb{R}),Lip(f)\leqslant1\right\} 
\end{equation}
\end{definition}

\noindent Observe that the integral  $\int_{\mathbb{R}}f(x)d(\mu-\nu)(x)$ is invariant with respect to adding a constant to function $f$.
Consequently, for any $x^*\in\mathbb{R}$ holds

\[
W_{1}(\mu,\nu)=\sup\left\{ \int_{\mathbb{R}}f(x)d(\mu-\nu)(x):\, f\in C(\mathbb{R},\mathbb{R}),f(x^*)=0,Lip(f)\leqslant1\right\}. 
\]

Furthermore, the following two properties hold:

\begin{itemize}
\item $1$-Wasserstein distance is scale-invariant
\[
W_{1}(\lambda\cdot\mu,\lambda\cdot\nu)=\lambda W_{1}(\mu,\nu).
\]
\item $1$-Wasserstein distance is translation-invariant
\[
W_{1}(T_{x}\mu,T_{x}\nu)= W_{1}(\mu,\nu).
\]

\end{itemize}

\subsection{Normalized Wasserstein metric}

If $\mu(\mathbb{R})\neq\nu(\mathbb{R})$, then $W_{1}(\mu,\nu)=\infty$ by the dual representation definition.
It makes this metric useless for processes not preserving the conservation of mass. 

In this section we introduce a simple normalization that leads to a definition of translation-invariant metric suitable for non-local models 

\begin{definition}
We define normalized $1$-Wasserstein distance between two measures $\mu$ and $\nu$ as

\begin{equation}\label{normalizedm}
\widetilde{W}_{1}(\mu,\nu)=\min\left(\left\Vert \mu\right\Vert +\left\Vert \nu\right\Vert ,|\left\Vert \mu\right\Vert -\left\Vert \nu\right\Vert |+W_{1}\left(\frac{\mu}{\left\Vert \mu\right\Vert},\frac{\nu}{\left\Vert \nu\right\Vert}\right)\right),
\end{equation}
where $\left\Vert \mu\right\Vert $ denotes the total variation of the measure $\mu$.
\end{definition}

\begin{lemma}
The distance defined by \eqref{normalizedm} is a metric.
\end{lemma}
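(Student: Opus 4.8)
Write $a=\|\mu\|$, $b=\|\nu\|$, $c=\|\rho\|$, and let $\hat\mu=\mu/\|\mu\|$, $\hat\nu=\nu/\|\nu\|$, $\hat\rho=\rho/\|\rho\|$ denote the normalizations of nonzero measures, adopting the convention that the second entry of the minimum in \eqref{normalizedm} is dropped whenever one of the arguments is the zero measure, so that $\widetilde W_1(\mu,0)=\|\mu\|$. The plan is to verify the four metric axioms in turn; only the triangle inequality requires real work. Non-negativity is immediate, since both entries of the minimum are non-negative, and symmetry follows at once from the symmetry of $t\mapsto|t|$ and of $W_1$, the latter being a metric on probability measures. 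For the identity of indiscernibles, $\mu=\nu$ gives $|a-b|=0$ and $W_1(\hat\mu,\hat\nu)=0$, hence $\widetilde W_1(\mu,\nu)=0$; conversely, if $\widetilde W_1(\mu,\nu)=0$ and the two measures do not both vanish, then the first entry $a+b$ is positive, so the second entry must vanish, giving $a=b$ and $W_1(\hat\mu,\hat\nu)=0$, which forces $\hat\mu=\hat\nu$ since $W_1$ separates probability measures, and therefore $\mu=a\hat\mu=b\hat\nu=\nu$.

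Before treating the triangle inequality I would record the elementary observation that $\widetilde W_1(\sigma,\tau)\ge\bigl|\,\|\sigma\|-\|\tau\|\,\bigr|$ for every pair $\sigma,\tau$, because each of the two entries of the corresponding minimum is at least $\bigl|\,\|\sigma\|-\|\tau\|\,\bigr|$; this bound disposes of the degenerate subcases in which one of the three measures is zero. For the triangle inequality $\widetilde W_1(\mu,\nu)\le\widetilde W_1(\mu,\rho)+\widetilde W_1(\rho,\nu)$, I would split on which entry of \eqref{normalizedm} realizes each of the two terms on the right-hand side, keeping in mind that $\widetilde W_1(\mu,\nu)\le a+b$. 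If at least one of the terms equals its unnormalized entry, say $\widetilde W_1(\mu,\rho)=a+c$, then the elementary inequality $c+|c-b|\ge b$ together with $W_1\ge 0$ gives $\widetilde W_1(\mu,\rho)+\widetilde W_1(\rho,\nu)\ge a+c+|c-b|\ge a+b\ge\widetilde W_1(\mu,\nu)$, and the companion possibilities ($\widetilde W_1(\rho,\nu)=c+b$, or both terms unnormalized with sum $a+b+2c$) go through identically. In the one remaining case both terms equal their normalized entries, and then the usual triangle inequalities for $|\cdot|$ on $\mathbb R$ and for $W_1$ on probability measures combine to give
\[
\widetilde W_1(\mu,\rho)+\widetilde W_1(\rho,\nu)=|a-c|+|c-b|+W_1(\hat\mu,\hat\rho)+W_1(\hat\rho,\hat\nu)\ge|a-b|+W_1(\hat\mu,\hat\nu)\ge\widetilde W_1(\mu,\nu).
\]

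The main obstacle is not any single estimate but making the case analysis for the triangle inequality genuinely exhaustive — in particular covering the mixed cases (one term of each type) and the degenerate ones in which a measure vanishes, where one falls back on $\widetilde W_1(\cdot,0)=\|\cdot\|$ together with the bound $\widetilde W_1(\sigma,\tau)\ge\bigl|\,\|\sigma\|-\|\tau\|\,\bigr|$. The pivotal elementary fact that makes the first family of cases close is $c+|c-b|\ge b$ for all $b,c\ge 0$, which lets the crude estimate $a+b$ absorb everything not already governed by the genuine $W_1$-triangle inequality. I would finish with the remark that, unlike $W_1$, the distance $\widetilde W_1$ is finite on every pair of finite measures, since the first entry $\|\mu\|+\|\nu\|$ of the minimum is always finite — which is precisely what the truncation in \eqref{normalizedm} provides.
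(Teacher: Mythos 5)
Your proof is correct and follows essentially the same route as the paper's: a case analysis on which entry of the minimum in \eqref{normalizedm} is attained for each of the two terms, using the triangle inequality for $W_1$ on probability measures when both are normalized and the elementary bound $c+|c-b|\geqslant b$ otherwise. The only difference is cosmetic --- you merge the paper's four cases into two via the observation $\widetilde W_1(\sigma,\tau)\geqslant\bigl|\,\|\sigma\|-\|\tau\|\,\bigr|$, and you explicitly handle the zero-measure degeneracy that the paper's definition leaves implicit.
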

\begin{proof}
Let $\mu$, $\nu$ and $\eta$ be Radon measures. Then, it holds
\begin{itemize}
\item[{\it 1.}] $\widetilde{W}_{1}(\mu,\nu) = 0$ if and only if  $\mu=\nu$. 

Indeed,
either $\left\Vert \mu\right\Vert +\left\Vert \nu\right\Vert =0$ or $|\left\Vert \mu\right\Vert -\left\Vert \nu\right\Vert |+W_{1}\left(\frac{\mu}{\left\Vert \mu\right\Vert},\frac{\nu}{\left\Vert \nu\right\Vert}\right)=0$
imply that $\mu=\nu$.
\item[{\it 2.}] $ \widetilde{W}_{1}(\mu,\nu) = \widetilde{W}_{1}(\nu,\mu)$,
\item[{\it 3.}]  Since
\begin{eqnarray*}
\widetilde{W}_{1}(\mu,\nu)+\widetilde{W}_{1}(\nu,\eta) &=& \min\left(\left\Vert \mu\right\Vert +\left\Vert \nu\right\Vert ,|\left\Vert \mu\right\Vert -\left\Vert \nu\right\Vert |+{W}_{1}\left(\frac{\mu}{\left\Vert \mu\right\Vert},\frac{\nu}{\left\Vert \nu\right\Vert}\right)\right)\\&&+\min\left(\left\Vert \eta\right\Vert +\left\Vert \nu\right\Vert ,|\left\Vert \eta\right\Vert -\left\Vert \nu\right\Vert |+{W}_{1}\left(\frac{\eta}{\left\Vert \eta\right\Vert},\frac{\nu}{\left\Vert \nu\right\Vert}\right)\right),
\end{eqnarray*}
to show the triangle inequality, we consider four possibilities
\begin{eqnarray*}
\widetilde{W}_{1}(\mu,\nu)+\widetilde{W}_{1}(\nu,\eta)&=&\left\Vert \mu\right\Vert +\left\Vert \nu\right\Vert +\left\Vert \eta\right\Vert +\left\Vert \nu\right\Vert \geqslant\left\Vert \mu\right\Vert +\left\Vert \eta\right\Vert \geqslant \widetilde{W}_{1}(\mu,\eta),\\
\widetilde{W}_{1}(\mu,\nu)+\widetilde{W}_{1}(\nu,\eta)&=&|\left\Vert \mu\right\Vert -\left\Vert \nu\right\Vert |+W_{1}\left(\frac{\mu}{\left\Vert \mu\right\Vert},\frac{\nu}{\left\Vert \nu\right\Vert}\right)\\
&&+|\left\Vert \eta\right\Vert -\left\Vert \nu\right\Vert |+W_{1}\left(\frac{\eta}{\left\Vert \eta\right\Vert},\frac{\nu}{\left\Vert \nu\right\Vert}\right)\geqslant \widetilde{W}_{1}(\mu,\eta),\\
\widetilde{W}_{1}(\mu,\nu)+\widetilde{W}_{1}(\nu,\eta)&=&\left\Vert \mu\right\Vert +\left\Vert \nu\right\Vert +|\left\Vert \eta\right\Vert -\left\Vert \nu\right\Vert |+W_{1}\left(\frac{\eta}{\left\Vert \eta\right\Vert},\frac{\nu}{\left\Vert \nu\right\Vert}\right)\geqslant\left\Vert \mu\right\Vert +\left\Vert \eta\right\Vert \\ &\geqslant& \widetilde{W}_{1}(\mu,\eta),\\
\widetilde{W}_{1}(\mu,\nu)+\widetilde{W}_{1}(\nu,\eta)&=&\left\Vert \eta\right\Vert +\left\Vert \nu\right\Vert +|\left\Vert \mu\right\Vert -\left\Vert \nu\right\Vert |+W_{1}\left(\frac{\mu}{\left\Vert \mu\right\Vert},\frac{\nu}{\left\Vert \nu\right\Vert}\right)\geqslant\left\Vert \mu\right\Vert +\left\Vert \eta\right\Vert \\ &\geqslant& \widetilde{W}_{1}(\mu,\nu).
\end{eqnarray*}
\end{itemize}
\end{proof}
\noindent This metric can be easily computed numerically using the algorithm for $1$-Wasserstein distance. However, it lacks the scaling property,  which holds for the Wasserstein distance, and which is useful for applications. Nonetheless, the following weaker property holds.

\begin{proposition}
Let ${\mu_k}$ and ${\nu_k}$ be two sequences of Radon measures and $\left\Vert \mu_k \right\Vert \to 0$, $\left\Vert \nu_k \right\Vert \to 0$ then $\widetilde{W}_{1}(\mu_k,\nu_k)\to 0$
\end{proposition}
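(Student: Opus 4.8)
The plan is to read off the conclusion directly from the definition \eqref{normalizedm} as a minimum of two nonnegative quantities, without invoking any transport estimate. The key observation is simply that the minimum of two numbers never exceeds the first one, so for every index $k$ we have
\[
0 \le \widetilde{W}_{1}(\mu_k,\nu_k) \le \left\Vert \mu_k \right\Vert + \left\Vert \nu_k \right\Vert .
\]
Here the lower bound needs nothing more than the fact that both arguments of the minimum in \eqref{normalizedm} are nonnegative (in particular one does not even have to appeal to the metric property just proved), and the upper bound is the trivial inequality $\min(a,b)\le a$. By hypothesis $\left\Vert \mu_k \right\Vert \to 0$ and $\left\Vert \nu_k \right\Vert \to 0$, hence the right-hand side tends to $0$, and the squeeze theorem gives $\widetilde{W}_{1}(\mu_k,\nu_k)\to 0$.

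There is no genuine obstacle to overcome: the statement merely records the design feature that the term $\left\Vert \mu \right\Vert + \left\Vert \nu \right\Vert$ was inserted into the minimum precisely to keep $\widetilde{W}_{1}$ under control near the zero measure, where the normalized transport term $W_{1}\!\left(\mu/\left\Vert\mu\right\Vert,\nu/\left\Vert\nu\right\Vert\right)$ carries no information about the masses and may even be undefined. One may optionally note that the bound above also yields a linear rate in the masses, which serves, in a weak form, as a partial substitute for the scaling identity $W_{1}(\lambda\mu,\lambda\nu)=\lambda W_{1}(\mu,\nu)$ that $\widetilde{W}_{1}$ fails to satisfy.
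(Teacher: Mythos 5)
Your argument is correct: since both entries of the minimum in the definition of $\widetilde{W}_{1}$ are nonnegative, the bound $0 \le \widetilde{W}_{1}(\mu_k,\nu_k)\le \left\Vert \mu_k\right\Vert +\left\Vert \nu_k\right\Vert$ is immediate and the conclusion follows by squeezing. The paper states this proposition without proof, evidently relying on exactly this one-line observation, so your proposal matches the intended (and essentially only) argument.
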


\subsubsection{Centralized Wasserstein metric}

In this section we present a different modification of  the $1$-Wasserstein distance, which is scale-invariant.

\begin{definition}
We define centralized $1$-Wasserstein distance between two measures $\mu$ and $\nu$ as
\[
\widehat{W}_{1}(\mu,\nu)=\sup\left\{ \left|\int_{\mathbb{R}}f(x)d(\mu-\nu)(x)\right|:f\in C(\mathbb{R},\mathbb{R}),Lip(f)\leqslant1,f(0)\in[-1,1]\right\}. 
\]
\end{definition}

The metric was introduced in \cite{GMC} for analysis of the structured population models in the spaces of non-negative Radon measures, ${\cal M}^+_1({\mathbb R^+})$, on $\mathbb R^+$ with integrable first moment, i.e., $$ {\cal M}_1({\mathbb R^+}):= \Big\{ \mu \in {\cal M^+}({\mathbb R^+}) \; \big| \;
  \int_{\mathbb R^+} \: |x| \; d \mu < \infty \Big\}.
$$

This metric satisfies the scaling property, but on the contrary to Wasserstein metric,  it is not invariant with respect to translations. It is therefore only useful for modeling specific phenomena, for example  structured population dynamics where mass generation can only occur in one specific point of space.

\subsection{Flat metric}

Another solution of the problem of comparing two measures of different
masses is the flat metric which is defined as follows
\begin{definition}
Flat distance between two measures 
$\mu$ and $\nu$ is given by
\[
F(\mu,\nu)=\sup\left\{ \left|\int_{\mathbb{R}}f(x)d(\mu-\nu)(x)\right|:f\in B_{C(\mathbb{R},\mathbb{R})}(0,1),Lip(f)\leqslant1\right\}. 
\]
\end{definition}
\noindent The flat metric, known also as a bounded Lipschitz distance \cite{Neunzert}, corresponds to the dual norm of $W^{1, \infty}(\mathbb{R})$, since the test functions used in the above definition are dense in $W^{1,\infty}(\mathbb R)$.
This metric satisfies the scaling property and it is translation invariant. It has proven to be useful in analysis of structured population models and in particularly,  Lipschitz dependence of solutions on the model parameters and initial data \cite{GLMC,CCGU}.  The flat metric has been recently used for the proof of convergence and stability of EBT, which is a numerical algorithm based on particle method, for the transport equation with growth terms \cite{EBT}. Consequently, an implementation of the EBT method requires an algorithm to compute the flat distance between two measures. 

\subsection{Upper bound for flat metric}

Computing of flat metric exactly is more costly than computing of  $1$-Wasserstein metric ($O(n\log n)$ vs $O(n)$, see Section \ref{SectionFlatAlg}). Moreover, often in applications it is sufficient to calculate upper bound of the distance, for example when computing residual error.  We propose the following function, which requires only linear computing time.

\begin{eqnarray*}
\widehat F(\mu, \nu) = |\left\Vert \mu\right\Vert -\left\Vert \nu\right\Vert |+\begin{cases}
W_{1}\left(\mu,\nu\frac{\left\Vert \mu\right\Vert}{\left\Vert \nu\right\Vert}\right) & \mbox{if }\left\Vert \mu\right\Vert< \left\Vert \nu\right\Vert\\
W_{1}\left(\mu\frac{\left\Vert \nu\right\Vert}{\left\Vert \mu\right\Vert},\nu\right) & \mbox{if }\left\Vert \mu\right\Vert \geq \left\Vert \nu\right\Vert \end{cases}
\end{eqnarray*}

\begin{lemma}
Let $\mu$, $\nu$ be Radon measures on $\mathbb R$. Then, the following estimate holds
\[
F(\mu,\nu) \leq \widehat F(\mu, \nu)
\]
\end{lemma}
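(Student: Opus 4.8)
The plan is to exploit the fact that the flat metric $F$ is a genuine metric — in particular it obeys the triangle inequality — and to split the comparison of $\mu$ and $\nu$ into two conceptually distinct steps: a pure mass adjustment, and a mass-preserving transport.

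By symmetry of $F$ it suffices to treat the case $\|\mu\| \geq \|\nu\|$; the other case follows by interchanging $\mu$ and $\nu$ and using the corresponding branch in the definition of $\widehat F$. Put $\mu' := \tfrac{\|\nu\|}{\|\mu\|}\mu$, so that $\|\mu'\| = \|\nu\|$, and write
\[
F(\mu,\nu) \leq F(\mu,\mu') + F(\mu',\nu).
\]
For the first summand I would note that $\mu - \mu' = \bigl(1 - \tfrac{\|\nu\|}{\|\mu\|}\bigr)\mu$, so for every test function $f$ admissible in the definition of $F$ (in particular $\|f\|_\infty \leq 1$),
\[
\left| \int_{\mathbb{R}} f \, d(\mu-\mu') \right| = \Bigl(1 - \tfrac{\|\nu\|}{\|\mu\|}\Bigr)\left| \int_{\mathbb{R}} f \, d\mu \right| \leq \Bigl(1 - \tfrac{\|\nu\|}{\|\mu\|}\Bigr)\|\mu\| = \|\mu\| - \|\nu\|,
\]
whence $F(\mu,\mu') \leq \bigl|\|\mu\| - \|\nu\|\bigr|$. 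For the second summand, the supremum defining $F(\mu',\nu)$ ranges over a subset of the $1$-Lipschitz functions used to define $W_1(\mu',\nu)$, so $F(\mu',\nu) \leq W_1(\mu',\nu)$; moreover $W_1(\mu',\nu) < \infty$ because $\|\mu'\| = \|\nu\|$ (and if it were infinite the asserted bound would be trivial). Adding the two estimates yields exactly $F(\mu,\nu) \leq \bigl|\|\mu\| - \|\nu\|\bigr| + W_1\bigl(\mu\tfrac{\|\nu\|}{\|\mu\|},\nu\bigr) = \widehat F(\mu,\nu)$.

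I do not expect a serious obstacle here: the argument is essentially a two-line triangle-inequality estimate once one realizes that the right intermediate measure is the rescaling of the heavier of the two. The only points needing a brief check are the degenerate cases $\|\mu\| = 0$ or $\|\nu\| = 0$ — if one measure is the zero measure, the rescaled measure is again zero and both sides collapse to the total variation of the other — and the bookkeeping that the rescaling must be applied to the measure of larger mass so that the resulting $W_1$ term coincides with the one appearing in the definition of $\widehat F$.
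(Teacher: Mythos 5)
Your proof is correct and follows essentially the same route as the paper: a triangle inequality through the rescaled copy of the heavier measure, with the transport term bounded by $W_1$ and the rescaling term bounded by the mass difference using $\|f\|_\infty \leq 1$. The only (harmless) difference is that you bound the rescaling term by an inequality valid for signed measures, whereas the paper computes it exactly using the non-negativity of $\nu$.
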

 \begin{proof}
 Assume that $\left\Vert \mu\right\Vert< \left\Vert \nu\right\Vert$. Then, using the estimate of the flat metric by $1$-Wasserstein distance for the measures with equal masses provides
 \begin{eqnarray*}
 F(\mu, \nu) &\leq& F(\mu, \nu\cdot \frac{\left\Vert \mu\right\Vert}{\left\Vert \nu\right\Vert}) + F(\nu\cdot \frac{\left\Vert \mu\right\Vert}{\left\Vert \nu\right\Vert}, \nu)\\
 &\leq& W_{1}\left(\mu,\nu\cdot \frac{\left\Vert \mu\right\Vert}{\left\Vert \nu\right\Vert}\right) + F(\nu\cdot \frac{\left\Vert \mu\right\Vert}{\left\Vert \nu\right\Vert}, \nu) \\
 &=& W_{1}\left(\mu,\nu\cdot \frac{\left\Vert \mu\right\Vert}{\left\Vert \nu\right\Vert}\right)  + \sup\left\{ \left|\int_{\mathbb{R}}f(x)(  \frac{\left\Vert \mu\right\Vert}{\left\Vert \nu\right\Vert}-1)  d \nu (x)\right|:f\in B_{C(\mathbb{R},\mathbb{R})}(0,1),Lip(f)\leqslant1\right\}\\
  &=& W_{1}\left(\mu,\nu\cdot \frac{\left\Vert \mu\right\Vert}{\left\Vert \nu\right\Vert}\right)  +  \left| \frac{\left\Vert \mu\right\Vert}{\left\Vert \nu\right\Vert}-1\right| \|\nu\|.
 \end{eqnarray*}
 The last equality results from the representation of Radon distance and positivity of  measure $\nu$.
 \end{proof}

The upper bound function is more useful if it can be estimated from above by $c\cdot F(\mu,\nu)$ for some constant $c$. In a general case, however, such constant does not exist, since by taking $\mu=\delta_0$, $\nu=\delta_x$ and passing $x\to\infty$ we obtain
\begin{eqnarray*}
 \widehat F(\mu, \nu)& \to& \infty,\\
 F(\mu,\nu) &\to&  2.
\end{eqnarray*}
Nevertheless, the desired estimate can be shown on a bounded set.

\begin{lemma}
Let $\mu$, $\nu$ be non-negative Radon measures on a compact set $K$, then the following estimate holds
\[
\widehat F(\mu, \nu) \leq c_K F(\mu, \nu)
\]
\end{lemma}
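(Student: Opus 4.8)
The plan is to reduce the estimate on $\widehat F$ to a comparison between $1$-Wasserstein distance and flat distance for measures living on a compact set $K$. Recall from the previous lemma's proof that, assuming $\|\mu\| < \|\nu\|$, we have the exact identity $\widehat F(\mu,\nu) = \big|\|\mu\|-\|\nu\|\big| + W_1\!\left(\mu, \nu\frac{\|\mu\|}{\|\nu\|}\right)$, and symmetrically in the other case. So it suffices to bound each of the two summands by a constant multiple of $F(\mu,\nu)$. The first summand $\big|\|\mu\|-\|\nu\|\big|$ is easy: testing the flat-metric supremum against the constant function $f\equiv 1$ (which lies in $B_{C}(0,1)$ and has zero Lipschitz constant) gives $\big|\|\mu\|-\|\nu\|\big| = \big|\int_{\mathbb R} 1 \, d(\mu-\nu)\big| \le F(\mu,\nu)$. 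So the whole game is to bound $W_1\!\left(\mu, \nu\frac{\|\mu\|}{\|\nu\|}\right)$ by $c_K F(\mu,\nu)$.

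For the Wasserstein term, first I would use the triangle inequality for $W_1$ (valid since all three measures have equal mass $\|\mu\|$) to split
\[
W_1\!\left(\mu, \nu\tfrac{\|\mu\|}{\|\nu\|}\right) \le W_1(\mu,\nu) + W_1\!\left(\nu, \nu\tfrac{\|\mu\|}{\|\nu\|}\right),
\]
noting the last term is $\big|1 - \tfrac{\|\mu\|}{\|\nu\|}\big|\,\|\nu\| \cdot \mathrm{diam}$-type bound; more precisely, moving all the mass of $\nu$ (an excess of $\big|\|\mu\|-\|\nu\|\big|$) across $K$ costs at most $\mathrm{diam}(K)\cdot\big|\|\mu\|-\|\nu\|\big| \le \mathrm{diam}(K)\, F(\mu,\nu)$ by the previous paragraph. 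Wait — this requires care, since $\nu\tfrac{\|\mu\|}{\|\nu\|}$ has smaller mass than $\nu$ when $\|\mu\|<\|\nu\|$, so they do not have equal mass; instead I would bound $W_1(\nu\tfrac{\|\mu\|}{\|\nu\|}, \nu)$ directly by observing these are proportional measures, and in the dual representation with $f(x^*)=0$ for some $x^*\in K$ one gets $\big|\int f \, d(\nu - \nu\tfrac{\|\mu\|}{\|\nu\|})\big| = \big|1-\tfrac{\|\mu\|}{\|\nu\|}\big| \big|\int f \, d\nu\big| \le \big|1-\tfrac{\|\mu\|}{\|\nu\|}\big| \,\mathrm{diam}(K)\, \|\nu\| = \mathrm{diam}(K)\,\big|\|\mu\|-\|\nu\|\big|$, again controlled by $\mathrm{diam}(K)\,F(\mu,\nu)$.

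It thus remains to bound $W_1(\mu,\nu)$, with $\mu,\nu$ now possibly of different masses — but the difference of masses is already controlled, so the genuine content is: \emph{for non-negative measures on $K$, $W_1 \le c_K F$.} Here I would use the dual representations. Given any $1$-Lipschitz $f$ with $f(x^*)=0$ at some $x^*\in K$, on $K$ we have $|f(x)| \le \mathrm{diam}(K)$, so the rescaled function $g = f/\max(1,\mathrm{diam}(K))$ lies in $B_{C(\mathbb R,\mathbb R)}(0,1)$ and has $\mathrm{Lip}(g)\le 1$, hence is admissible for $F$; therefore $\int f \, d(\mu-\nu) = \max(1,\mathrm{diam}(K)) \int g \, d(\mu-\nu) \le \max(1,\mathrm{diam}(K))\, F(\mu,\nu)$. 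Taking the supremum over $f$ gives $W_1(\mu,\nu) \le \max(1,\mathrm{diam}(K))\, F(\mu,\nu)$ whenever $\|\mu\|=\|\nu\|$, and with the mass-difference correction it still holds up to an additive $\mathrm{diam}(K)\,F(\mu,\nu)$ term in general. Collecting the three bounds yields $\widehat F(\mu,\nu) \le c_K F(\mu,\nu)$ with $c_K$ depending only on $\mathrm{diam}(K)$ (e.g. $c_K = 1 + 2\max(1,\mathrm{diam}(K))$ suffices after bookkeeping).

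The main obstacle I anticipate is the bookkeeping around unequal masses: $W_1$ in its transport/dual form is only finite and only satisfies the triangle inequality for equal-mass measures, so every step invoking $W_1$ must be accompanied by a mass-matching argument, and one must be careful that the normalization constant $\nu\tfrac{\|\mu\|}{\|\nu\|}$ versus $\mu\tfrac{\|\nu\|}{\|\mu\|}$ (the two branches in the definition of $\widehat F$) is handled symmetrically. Apart from that, the argument is a routine use of the two dual formulas plus the observation that on a bounded set the sup-norm constraint in the flat metric costs only a factor $\mathrm{diam}(K)$ relative to the unconstrained (Lipschitz-only) Wasserstein test class.
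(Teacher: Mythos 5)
Your proposal is correct and follows essentially the same route as the paper: bound $\bigl|\Vert\mu\Vert-\Vert\nu\Vert\bigr|$ by $F(\mu,\nu)$ via the constant test function, split $W_{1}\left(\mu,\nu\frac{\Vert\mu\Vert}{\Vert\nu\Vert}\right)$ by the triangle inequality into $W_{1}(\mu,\nu)$ plus the rescaling term, and control $W_{1}(\mu,\nu)$ by $c_K F(\mu,\nu)$ by rescaling the Lipschitz test functions on the bounded set $K$. If anything, you are more careful than the paper about the fact that $W_{1}$ between measures of unequal mass is only meaningful under the normalization $f(x^*)=0$ at some $x^*\in K$ (and up to the harmless truncation of the rescaled test function outside $K$), a point the paper's own proof glosses over; the resulting constants differ only in bookkeeping.
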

\begin{proof}
Let $c_K=max(1,\frac{1}{2} diam(K))$ and $x_0$ be such point that $|k-x_0|\leq c_K$ for any $k\in K$. Then, assuming $\left\Vert \mu\right\Vert< \left\Vert \nu\right\Vert$, we obtain

\begin{eqnarray*}
\widehat F(\mu, \nu) &=& |\left\Vert \mu\right\Vert -\left\Vert \nu\right\Vert | + W_{1}\left(\mu,\nu\frac{\left\Vert \mu\right\Vert}{\left\Vert \nu\right\Vert}\right) \\
&=& |\int d(\mu-\nu)| + W_{1}\left(\mu,\nu\frac{\left\Vert \mu\right\Vert}{\left\Vert \nu\right\Vert}\right) \\
&\leq& F(\mu,\nu) + W_{1}\left(\mu,\nu\frac{\left\Vert \mu\right\Vert}{\left\Vert \nu\right\Vert}\right) \\
&\leq& F(\mu,\nu) + W_{1}\left(\mu,\nu\right)+W_{1}\left(\nu, \nu\frac{\left\Vert \mu\right\Vert}{\left\Vert \nu\right\Vert}\right) \\
&\leq& 2\cdot F(\mu,\nu) + W_{1}\left(\mu,\nu\right) \\
&=& 2\cdot F(\mu,\nu) + \sup\left\{ \left|\int_{\mathbb{R}}f(x)d(\mu-\nu)(x)\right|:f\in C(\mathbb{R},\mathbb{R}),Lip(f)\leqslant1\right\} \\
&=& 2\cdot F(\mu,\nu) + c_K \sup\left\{ \left|\int_{\mathbb{R}}f(x)d(\mu-\nu)(x)\right|:f\in C(\mathbb{R},\mathbb{R}),Lip(f)\leqslant\frac{1}{c_K}\right\} \\
&\leq& (2+c_K) F(\mu,\nu) \\
\end{eqnarray*}
\end{proof}
 
\subsection{Comparison of the presented metrics}

In the following Table we compare the introduced metrics and provide examples and interpretations in terms of optimal transport.

\begin{longtable}{|>{\centering}p{1.5cm}|>{\centering}p{2.7cm}|>{\centering}p{1.4cm}|>{\centering}p{1.8cm}|>{\centering}p{5cm}|>{\centering}p{1.9cm}|}
\hline 
Metric  & Example: $d(2\delta_{x},3\delta_{y})$  & Scale-invariance & Translation-invariance & Intuition of $d(\mu,\nu)$  & Compute complexity\tabularnewline
\hline 
\hspace{-0.1cm}Wasserstein  & $\infty$  & YES  & YES  & The cost of optimal transportation of distribution $\mu$ to a state
given by $\nu$, assuming that moving mass $m$ by $x$ requires $mx$
energy.  & $O(n)$\tabularnewline
\hline 
\hspace{-0.1cm}Wasserstein normalized  & $\min(2+3,$ $(3-2)+|x-y|)$ & \multicolumn{1}{c|}{weak} & \multicolumn{1}{c|}{YES } & Minimum of the cost of annihilating $\mu$ and generating $\nu$; and
of the cost of generating the difference in mass between $\mu$ and $\nu$
and transporting $\frac{\mu}{\left\Vert \nu\right\Vert }$ to $\frac{\nu}{\left\Vert \nu\right\Vert }$,
assuming that generating/annihilating mass at any position requires the cost equal to the mass.  & $O(n)$\tabularnewline
\hline 
\hspace{-0.1cm}Wasserstein centralized  & $2|x-y|+|y|$  & YES  & NO  & The cost of generating the difference in mass in point $0$ in space
added to the cost of transporting $\mu+\left(\left\Vert \nu\right\Vert -\left\Vert \mu\right\Vert \right)\delta_{0}$
to $\nu$.  & $O(n)$\tabularnewline
\hline 
Flat  & $1+2\min\left(2,|x-y|\right)$  & YES  & YES  & The cost of optimal transporting AND/OR generating AND/OR annihilating
mass to form $\nu$ from $\mu$.  & $O(n\log n)$\tabularnewline
\hline 
Radon  & $2+3$  & YES  & YES  & The cost of generating AND/OR annihilating mass to form $\nu$ from
$\mu$  & $O(n)$\tabularnewline
\hline 

\caption{Comparison of different metrics.}\label{Table}
\end{longtable}

\section{Approximations and computational algorithms}\label{Algorithms}

In the remainder of this paper, we present algorithms for computing and approximating of the introduced metrics.

\subsection{Computing of $1$-Wasserstein distance}

We start with introducing the algorithm for computation of $1$-Wasserstein distance. The algorithm is well-known and straightforward. Nevertheless, we present it here, since the specific approach used in this section will be also applied later in the proof of correctness of more involved algorithms for other distances. 

\subsubsection{Reduction to the case of discrete measures}

\begin{lemma}

Let $\mu$ be an arbitrary Radon measure supported on $[0,1]$. For every $\varepsilon >0$ there exist a discrete measure $\mu_{\varepsilon}$ such that

\[
W_{1}(\mu, \mu_{\varepsilon}) < \varepsilon
\]
\end{lemma}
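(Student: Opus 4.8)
The plan is to discretize $[0,1]$ into finitely many small intervals, replace the mass of $\mu$ on each interval by a single Dirac delta placed inside that interval, and control the resulting $1$-Wasserstein error by the mesh size. More precisely, fix $N \in \mathbb{N}$ with $1/N < \varepsilon$, partition $[0,1]$ into the intervals $I_k = [k/N, (k+1)/N)$ for $k = 0, \dots, N-1$ (adding the right endpoint $1$ to $I_{N-1}$), set $a_k = \mu(I_k)$, pick any point $x_k \in I_k$ (e.g.\ $x_k = k/N$), and define $\mu_\varepsilon = \sum_{k=0}^{N-1} a_k \delta_{x_k}$. Note $\|\mu_\varepsilon\| = \sum_k a_k = \mu([0,1]) = \|\mu\|$, so the masses agree and $W_1(\mu,\mu_\varepsilon)$ is finite.

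To estimate $W_1(\mu,\mu_\varepsilon)$, I would use the dual representation \eqref{Wassersteinm}: for any $f \in C(\mathbb{R},\mathbb{R})$ with $\mathrm{Lip}(f) \le 1$,
\[
\left| \int_{\mathbb{R}} f \, d(\mu - \mu_\varepsilon) \right|
= \left| \sum_{k=0}^{N-1} \int_{I_k} \bigl( f(x) - f(x_k) \bigr) \, d\mu(x) \right|
\le \sum_{k=0}^{N-1} \int_{I_k} |f(x) - f(x_k)| \, d\mu(x).
\]
Since $x, x_k \in I_k$ and $I_k$ has length $1/N$, the Lipschitz bound gives $|f(x) - f(x_k)| \le |x - x_k| \le 1/N$, hence the whole sum is at most $\frac{1}{N} \sum_k \mu(I_k) = \frac{1}{N}\|\mu\|$. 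Taking the supremum over admissible $f$ yields $W_1(\mu,\mu_\varepsilon) \le \|\mu\|/N$. Choosing $N$ large enough that $\|\mu\|/N < \varepsilon$ (legitimate since $\mu$ is a Radon measure on the compact set $[0,1]$, so $\|\mu\| < \infty$) finishes the argument.

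The only mild subtlety — which I would state but not belabor — is the reduction to finite total mass: the statement says "Radon measure supported on $[0,1]$", and a Radon measure is finite on compacts, so $\|\mu\| = \mu([0,1]) < \infty$ and the normalization above is harmless. Everything else is routine; the key mechanism is simply that moving each chunk of mass a distance at most $1/N$ costs at most $1/N$ per unit mass in the $W_1$ metric, which is exactly what the dual formulation makes transparent. This same "partition, collapse to Diracs, bound via Lipschitz test functions" scheme is the one the paper announces will be reused for the correctness proofs of the later algorithms, so I would set it up cleanly here.
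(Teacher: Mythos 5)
Your proposal is correct and follows essentially the same route as the paper: partition $[0,1]$ into $n$ intervals of length $1/n$, collapse the mass of each interval onto a single Dirac, and bound $W_1(\mu,\mu_\varepsilon)$ by $\|\mu\|/n$. The only difference is that you spell out the dual-representation estimate with Lipschitz test functions, which the paper states without justification.
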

\begin{proof}
Define
\begin{eqnarray*}
{\mu_{\frac{\mu[0,1]}{n}}}&=&\sum_{i=1}^{n}\delta_{i/n}\mu\left[\frac{i-1}{n},\frac{i}{n}\right).\\
\end{eqnarray*}
Then, we estimate
\[
W_{1}(\mu,\mu_{\frac{\mu[0,1]}{n}}) \leq \sum_{i=1}^{n}\frac{1}{n} \mu\left[\frac{i-1}{n},\frac{i}{n}\right)
\leq \frac{1}{n} \mu[0,1],
\]
and the right-hand side tends to $0$ with $n\to \infty$.
\end{proof}

Consequently, $1$-Wasserstein distance between an arbitrary pair of finite Radon measures can be estimated by the distances between their discrete approximations. From this point on, we assume that
\[
\mu-\nu=\sum_{k=1}^{n}a_{k}\delta_{x_{k}}.
\]

\subsubsection{The algorithm}
The formula for $1$-Wasserstein distance reads

\[
W_{1}(\mu,\nu)=\sup\left\{ \sum_{k=1}^{n}a_{k}f(x_{k}):f\in C(\mathbb{R},\mathbb{R}),f(x_{n})=0,Lip(f)\leqslant1\right\}. 
\]
Regularity conditions can be represented as linear programming bounds. Hence, computing of $W_{1}(\mu,\nu)$ is equivalent to finding maximum of
\[
\left|\sum_{k=1}^{n}a_{k}f_{k}\right|
\]
with the following restrictions

\begin{eqnarray*}
f_{n} & = & 0,\\
\left|f_{k}-f_{k-1}\right| & \leqslant & \left|x_{k}-x_{k-1}\right|.
\end{eqnarray*}
Although this problem can be solved by linear programming, a more efficient
algorithm can be found.

Define
\[
W_{1}^{m}(x)=\sup\left\{ \sum_{k=1}^{m}a_{k}f_{k}:f_{m}=x,\left|f_{k}-f_{k-1}\right|\leqslant\left|x_{k}-x_{k-1}\right|\right\}. 
\]
Then, obviously $W_{1}(\mu,\nu)=W_{1}^{n}(0)$. Denote $d_{k}=x_{k+1}-x_{k}$,
and observe that
\[
W_{1}^{1}(x)=a_{1}x,
\]
\begin{eqnarray*}
W_{1}^{2}(x) & = & a_{2}x+\sup_{f_{1}\in[x-d_{1},x+d_{1}]}W_{1}^{1}(x)=a_{2}x+a_{1}x+a_{1}\cdot sgn(a_{1})d_{1}=\\
 & = & \left(a_{1}+a_{2}\right)x+|a_{1}|d_{1}.
\end{eqnarray*}
It can be shown by induction that
\[
W_{1}^{n}(x)=\left(\sum_{i=1}^{n}a_{i}\right)x+\sum_{i=1}^{n-1}d_{i}\left|\sum_{j=1}^{i}a_{j}\right|.
\]
Notice that the value $a_{n}$ is not used in this formula. It is,
however, involved indirectly, as $\sum_{i=1}^{n}a_{i}=0$.

\subsubsection{Pseudocode}
\begin{lyxcode}
\textbf{Input:~}Non-decreasing~table~of~positions\textbf{~$x\in[0,1]^{n}$,~}table~of~masses\textbf{~$a\in\mathbb{R}^{n}$}

\noun{1-Wasserstein-Distance}~($x\in[0,1]^{n}$,~$a\in\mathbb{R}^{n}$)
\begin{lyxcode}
$distance$~$\leftarrow$~$0$

$partialSum$~$\leftarrow$~$0$

\textbf{for}~$idx$$\leftarrow$~$1$~\textbf{to}~$n-1$~\textbf{do}
\begin{lyxcode}
$partialSum$~$\leftarrow$~$partialSum+a_{idx}$

$distance$~$\leftarrow$~$distance+(x_{idx+1}-x_{idx})\cdot\left|partialSum\right|$
\end{lyxcode}
\textbf{return}~$distance$
\end{lyxcode}
\end{lyxcode}

\subsubsection{Complexity of the algorithm}

It is clear from the pseudocode that the computational complexity
of the algorithm is $\Theta(n)$, while memory complexity is $\Theta(1)$.

\subsection{Computing of  the centralized Wasserstein distance}

\subsubsection{Reduction to the case of discrete measures}

Similarly to the case of $1$-Wasserstein distance the following lemma holds.

\begin{lemma}

Let $\mu$ be an arbitrary Radon measure supported on $[0,K]$. For every $\varepsilon >0$, there exists a discrete measure $\mu_{\varepsilon}$ such that

\[
{\widehat W}_{1}(\mu, \mu_{\varepsilon}) < \varepsilon.
\]
\end{lemma}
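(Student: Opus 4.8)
The plan is to mimic exactly the reduction already carried out for the $1$-Wasserstein distance, now controlling the extra ``centering'' degree of freedom in $\widehat W_1$. First I would fix $n$ and partition $[0,K]$ into the intervals $\left[\frac{(i-1)K}{n},\frac{iK}{n}\right)$ for $i=1,\dots,n$, and define the discrete approximation by collapsing the mass of each interval onto its right endpoint,
\[
\mu_{\varepsilon} \;=\; \sum_{i=1}^{n}\delta_{iK/n}\,\mu\!\left[\frac{(i-1)K}{n},\frac{iK}{n}\right),
\]
as in the lemma for $W_1$. The only thing that needs checking is the estimate $\widehat W_1(\mu,\mu_{\varepsilon})<\varepsilon$ for $n$ large.

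The key step is to observe that the centralized distance is dominated by the ordinary one, $\widehat W_1(\mu,\mu_\varepsilon)\le W_1(\mu,\mu_\varepsilon)$: this is immediate from the dual formulas, since the admissible test functions for $\widehat W_1$ (those with $Lip(f)\le 1$ and $f(0)\in[-1,1]$) form a subset of those for $W_1$ (just $Lip(f)\le 1$), so the supremum defining $\widehat W_1$ is over a smaller class. Actually one should be slightly careful: $W_1(\mu,\mu_\varepsilon)$ is defined only when the two measures have equal mass, which here holds because $\mu_\varepsilon$ was built to preserve the mass on each block, hence $\|\mu_\varepsilon\|=\mu[0,K]=\|\mu\|$. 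Given that, the bound from the $W_1$ lemma applies verbatim:
\[
\widehat W_1(\mu,\mu_\varepsilon)\;\le\;W_1(\mu,\mu_\varepsilon)\;\le\;\sum_{i=1}^{n}\frac{K}{n}\,\mu\!\left[\frac{(i-1)K}{n},\frac{iK}{n}\right)\;\le\;\frac{K}{n}\,\mu[0,K],
\]
where the middle inequality uses the transport plan that moves the mass of each block to its right endpoint, a displacement of at most $K/n$. Choosing $n>K\,\mu[0,K]/\varepsilon$ finishes the argument.

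The main obstacle — really the only subtlety — is making sure the comparison $\widehat W_1\le W_1$ is legitimate, i.e.\ that we are not comparing a finite quantity to $+\infty$; this is handled by the mass-preservation property of the construction noted above, so that $W_1(\mu,\mu_\varepsilon)$ is finite and the $W_1$ estimate transfers. Everything else is a direct transcription of the earlier proof, and the same construction will be reused when the discrete version of the centralized-distance algorithm is analysed.
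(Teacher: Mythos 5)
Your proposal is correct and follows essentially the same route as the paper: the paper's one-line proof also reduces to the $W_1$ case by noting that equal total masses (guaranteed by the block-collapsing construction) make $\widehat W_1$ coincide with $W_1$, whereupon the earlier discretization lemma applies. Your version uses only the inequality $\widehat W_1 \leq W_1$ from the inclusion of test-function classes, which is all that is needed and is the same underlying observation.
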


The proof follows from the fact that $\left\Vert \mu\right\Vert=\left\Vert \nu\right\Vert$ implies ${\widehat W}_{1}(\mu,\nu)=W_{1}(\mu,\nu)$.

\subsubsection{The Algorithm}

We assume
\[
\mu-\nu=\sum_{i=1}^{m}a_{i}\delta_{x_{i}}+a_{m+1}\delta_{0}+\sum_{i=m+2}^{n}a_{i}\delta_{x_{i}}.
\]
Define
\begin{eqnarray*}
\underline{W}_{1}^{j}(x) & = & \sup\left\{ \sum_{k=1}^{j}a_{k}f_{k}:f_{j}=x,\left|f_{k}-f_{k-1}\right|\leqslant\left|x_{k}-x_{k-1}\right|\right\}, \\
\overline{W}_{1}^{j}(x) & = & \sup\left\{ \sum_{k=j}^{n}a_{k}f_{k}:f_{j}=x,\left|f_{k}-f_{k-1}\right|\leqslant\left|x_{k}-x_{k-1}\right|\right\}. 
\end{eqnarray*}
As already proven
\begin{eqnarray*}
\underline{W}_{1}^{m+1}(x) & = & \left(\sum_{i=1}^{m+1}a_{i}\right)x+\sum_{k=1}^{m}d_{k}\left|\sum_{i=1}^{k}a_{i}\right|,\\
\overline{W}_{1}^{m+1}(x) & = & \left(\sum_{i=m+1}^{n}a_{i}\right)x+\sum_{k=1}^{n-(m+1)}d_{n-k}\left|\sum_{i=n+1-k}^{n}a_{i}\right|.
\end{eqnarray*}
From LP representation of the metric
\[
{\widehat W}_{1}(\mu,\nu)=\sup\left\{ \sum_{k=1}^{n}a_{k}f_{k}:-1\leqslant f_{m+1}\leqslant1,\left|f_{k}-f_{k-1}\right|\leqslant\left|x_{k}-x_{k-1}\right|\right\}, 
\]
it can be deduced that
\[
{\widehat W}_{1}(\mu,\nu)=\sup_{x\in[-1,1]}\left(\underline{W}_{1}^{m+1}(x)+\overline{W}_{1}^{m+1}(x)-a_{m+1}x\right),
\]
so the distance is given by the formula
\[
{\widehat W}_{1}(\mu,\nu)=\sum_{k=1}^{m}d_{k}\left|\sum_{i=1}^{k}a_{i}\right|+\sum_{k=1}^{n-(m+1)}d_{n-k}\left|\sum_{i=n+1-k}^{n}a_{i}\right|+\left|\sum_{i=1}^{n}a_{i}\right|.
\]

\subsubsection{Pseudocode}
\begin{lyxcode}
\textbf{Input:~}Non-decreasing~table~of~positions\textbf{~$x\in[0,1]^{k}\times\{0\}\times[0,1]^{n-k-1}$,~}table~of~masses\textbf{~$a\in\mathbb{R}^{n}$}

\noun{1-Wasserstein-Centralized-Distance}
\begin{lyxcode}
$distance$~$\leftarrow$~$0$

$(partialSumFront,\, partialSumBack)$~$\leftarrow$~$(0,\,0)$

$(idxFront,\, idxBack)\leftarrow(1,n)$

\textbf{while~$x_{idxFront}<0$}~\textbf{do}
\begin{lyxcode}
$partialSumFront$~$\leftarrow$~$partialSumFront+a_{idxFront}$

$distance$~$\leftarrow$~$distance+(x_{idxFront+1}-x_{idxFront})\cdot\left|partialSumFront\right|$

$idxFront\leftarrow idxFront+1$
\end{lyxcode}
\textbf{while~$x_{idxBack}>0$}~\textbf{do}
\begin{lyxcode}
$partialSumBack$~$\leftarrow$~$partialSumBack+a_{idxEnd}$

$distance$~$\leftarrow$~$distance+(x_{idxBack}-x_{idxBack-1})\cdot\left|partialSumBack\right|$

$idxBack\leftarrow idxBack-1$
\end{lyxcode}
\textbf{for~$idx\leftarrow idxFront$}~\textbf{to~$idxBack$~do}
\begin{lyxcode}
$partialSumFront\leftarrow partialSumFront+a_{idx}$
\end{lyxcode}
\textbf{return}~$distance+\left|partialSumFront+partialSumBack\right|$
\end{lyxcode}
\end{lyxcode}

\subsubsection{Complexity of the algorithm}

Each iteration of each loop takes a constant time. The total number
of iterations in all three loops is equal to $k+1+\left(n-k-1\right)$.
Computational complexity of this algorithm is therefore $\Theta(n)$,
while the memory complexity is $\Theta(1)$.

\label{SectionFlatAlg}

\subsection{Computing of flat distance}

The algorithm for flat distance requires storing the shape of functions analogous to $W_{1}^{m}$ as they get more complicated when $m$ increases. In Section \ref{flat_alg} we provide a recursive formula for the sequence of these functions. The pseudocode in Section \ref{flat_pseudo} implements the algorithm using an abstract data structure to store previously defined functions. The computational complexity depends on the choice of this structure. In further sections we provide two solutions that require $O(n^2)$ and $O(n\log n)$ operations.

\subsubsection{Reduction to the case of discrete measures}

Similarly to the previous cases the following lemma holds.

\begin{lemma}

Let $\mu$ be an arbitrary Radon measure supported on $[0,K]$. For every $\varepsilon >0$ there exist a discrete measure $\mu_{\varepsilon}$ such that

\[
F(\mu, \mu_{\varepsilon}) < \varepsilon
\]
\end{lemma}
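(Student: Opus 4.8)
The plan is to reuse the discrete approximation already constructed in the proof of the analogous lemma for $W_{1}$, together with the elementary observation that the flat metric is dominated by $W_{1}$ on pairs of measures of equal mass. Indeed, every competitor $f$ in the definition of $F(\mu,\nu)$ (that is, $f\in B_{C(\mathbb R,\mathbb R)}(0,1)$ with $Lip(f)\leqslant 1$) is in particular an admissible competitor in the definition of $W_{1}(\mu,\nu)$, where only the Lipschitz bound is imposed. Hence
\[
F(\mu,\nu)\leqslant W_{1}(\mu,\nu)
\]
for all $\mu,\nu$, and this bound carries information precisely when $\|\mu\|=\|\nu\|$, in which case the right-hand side is finite.

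Now fix $\varepsilon>0$ and, following the construction used for $W_{1}$, set
\[
\mu_{\varepsilon}=\sum_{i=1}^{n}\delta_{iK/n}\,\mu\left[\tfrac{(i-1)K}{n},\tfrac{iK}{n}\right),
\]
with the last subinterval understood to be closed at $K$, so that the half-open intervals partition the support $[0,K]$ of $\mu$. Then $\mu_{\varepsilon}$ is a discrete measure with $\|\mu_{\varepsilon}\|=\|\mu\|$, and transporting, within each subinterval, the mass of $\mu$ to the right endpoint $iK/n$ moves no portion of mass by more than $K/n$; exactly as before this yields
\[
W_{1}(\mu,\mu_{\varepsilon})\leqslant\sum_{i=1}^{n}\frac{K}{n}\,\mu\left[\tfrac{(i-1)K}{n},\tfrac{iK}{n}\right)=\frac{K}{n}\,\mu[0,K]=\frac{K}{n}\|\mu\|.
\]
Combining the two displays gives $F(\mu,\mu_{\varepsilon})\leqslant \frac{K}{n}\|\mu\|$, which is smaller than $\varepsilon$ once $n>K\|\mu\|/\varepsilon$.

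There is essentially no obstacle here: the statement is a routine reduction. The only points requiring (minor) care are the bookkeeping at the right endpoint of the support, handled above by closing the last subinterval, and the remark that the domination $F\leqslant W_{1}$ is invoked only for the equal-mass pair $(\mu,\mu_{\varepsilon})$, for which $W_{1}$ is finite and the estimate from the $W_{1}$ lemma applies verbatim.
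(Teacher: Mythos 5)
Your proof is correct and follows the same route as the paper: the paper's one-line justification is precisely that $\|\mu\|=\|\nu\|$ implies $F(\mu,\nu)\leq W_{1}(\mu,\nu)$, combined with the equal-mass discretization already built for the $W_{1}$ lemma. You merely spell out the details (the inclusion of test-function classes and the rescaling from $[0,1]$ to $[0,K]$) that the paper leaves implicit.
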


The proof follows from the fact that $\left\Vert \mu\right\Vert=\left\Vert \nu\right\Vert$ implies $F(\mu,\nu) \leq W_{1}(\mu,\nu)$.

\subsubsection{The Algorithm} \label{flat_alg}

We assume

\[
\mu-\nu=\sum_{i=1}^{n}a_{i}\delta_{x_{i}}.
\]
Computing of $F(\mu,\nu)$ is equivalent to finding maximum of

\[
\left|\sum_{k=1}^{n}a_{k}f_{k}\right|
\]
with the following restrictions

\begin{eqnarray*}
\left|f_{k}\right| & \leqslant & 1,\\
\left|f_{k}-f_{k-1}\right| & \leqslant & \left|x_{k}-x_{k-1}\right|.
\end{eqnarray*}
Define
\[
F^{m}(x)=\sup\left\{ \left|\sum_{k=1}^{m}a_{k}f_{k}\right|:f_{m}=x,\left|f_{k}-f_{k-1}\right|\leqslant\left|x_{k}-x_{k-1}\right|,\left|f_{k}\right|\leqslant1\right\}. 
\]
Obviously, it holds by definition $$F(\mu,\nu)=\sup_{x\in[-1,1]}F^{n}(x).$$
Observe that
\begin{eqnarray*}
F^{1}(x)&=&a_{1}x,\\
F^{2}(x) & = & a_{2}x+\sup_{f_{1}\in[x-d_{1},x+d_{1}]\cap[-1,1]}F^{1}(x) = a_{2}x+\min(|a_{1}|,a_{1}x+|a_{1}|d_{1}),\\\\\\
F^{m}(x) & = & a_{m}x+\sup_{f_{m-1}\in[x-d_{m-1},x+d_{m-1}]\cap[-1,1]}F^{m-1}(x).
\end{eqnarray*}
Computing of $F^{m}(x)$ based on $F^{m-1}$ is more complex than in previous cases, as $F^{m-1}$ is not necessarily monotonic. 
\begin{lemma}
 Function $F^{m}$ is concave for each $m$.
\end{lemma}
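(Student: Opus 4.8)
The plan is to prove concavity of $F^m$ by induction on $m$, using the recursive formula
\[
F^{m}(x) = a_{m}x + \sup_{f_{m-1}\in[x-d_{m-1},x+d_{m-1}]\cap[-1,1]} F^{m-1}(f_{m-1}).
\]
The base case $F^{1}(x)=a_1 x$ is affine, hence concave. For the inductive step, assume $F^{m-1}$ is concave on $[-1,1]$; I want to show the map
\[
g(x) := \sup_{f\in[x-d_{m-1},x+d_{m-1}]\cap[-1,1]} F^{m-1}(f)
\]
is concave, since then $F^m(x)=a_m x + g(x)$ is concave as the sum of a linear function and a concave function.

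The heart of the argument is thus a lemma of independent interest: if $h$ is concave on an interval $I$ and $d\ge 0$, then $x\mapsto \sup\{h(f): f\in [x-d,x+d]\cap I\}$ is concave on $\{x : [x-d,x+d]\cap I\neq\emptyset\}$. I would prove this directly from the definition of concavity. Fix $x_0,x_1$ in the relevant domain and $\lambda\in[0,1]$, and let $x_\lambda=\lambda x_0+(1-\lambda)x_1$. Choose near-optimal points $f_0\in[x_0-d,x_0+d]\cap I$ and $f_1\in[x_1-d,x_1+d]\cap I$ for $x_0$ and $x_1$ respectively. Then $f_\lambda:=\lambda f_0+(1-\lambda)f_1$ lies in $[x_\lambda - d, x_\lambda+d]$ (since $|f_\lambda - x_\lambda|\le \lambda|f_0-x_0|+(1-\lambda)|f_1-x_1|\le d$) and in $I$ (as $I$ is an interval), so it is a feasible competitor for $x_\lambda$; concavity of $h$ gives $h(f_\lambda)\ge \lambda h(f_0)+(1-\lambda)h(f_1)$, and taking suprema over $f_0,f_1$ yields $g(x_\lambda)\ge \lambda g(x_0)+(1-\lambda)g(x_1)$. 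One should also check the domain on which $g$ is defined is itself an interval, which is immediate since the Minkowski sum of the interval $I$ with $[-d,d]$ is an interval.

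I do not expect a serious obstacle here; the main point requiring a little care is the interaction between the moving window $[x-d,x+d]$ and the fixed clipping interval $[-1,1]$ — specifically, that intersecting with $[-1,1]$ does not destroy the feasibility of the convex combination $f_\lambda$, which works precisely because $[-1,1]$ is convex. A secondary subtlety worth a sentence is that one must restrict attention to $x\in[-1,1]$ (or, more precisely, to the domain where the constraint set is nonempty) so that $F^m$ is genuinely defined there; since $d_{m-1}\ge 0$, the window always meets $[-1,1]$ for $x\in[-1,1]$, so this is harmless. Finally, I would remark that concavity of $F^n$ immediately justifies that the supremum $\sup_{x\in[-1,1]}F^n(x)$ defining $F(\mu,\nu)$ is attained and can be located efficiently, which is what the algorithm exploits.
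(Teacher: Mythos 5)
Your proposal is correct and follows essentially the same route as the paper: induction on $m$, with the key step being that the windowed supremum $x\mapsto\sup\{F^{m-1}(f):f\in[x-d,x+d]\cap[-1,1]\}$ of a concave function is concave, proved by taking the convex combination of (near-)optimizers as a feasible competitor. The only minor differences are cosmetic — you use near-optimal points where the paper picks exact maximizers, and you spell out the domain/interval checks slightly more carefully.
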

\begin{proof}\label{concave}
To prove the lemma we will use induction with respect to $m$. $F^{1}(x)$ is given as $a_{1}x$, so it is indeed concave. 
Assume $F^{m}$ is concave. Define 

$$F_{max}^{n,d}(x)=\sup_{[x-d,x+d]\cap[-1,1]}F^{n}(x).$$ 
Choose $x,y\in[-1,1]$. Then, there exist $x'\in B(x,d)\cap[-1,1],\, y'\in B(y,d)\cap[-1,1]$ such that
\[
\alpha F_{max}^{m,d}(x)+(1-\alpha)F_{max}^{m,d}(y)=\alpha F^{m}(x')+(1-\alpha)F^{m}(y').
\]
Because $F^{m}$ is concave, it holds
\[
\alpha F^{m}(x')+(1-\alpha)F^{m}(y')\leqslant F^{m}\left(\alpha x'+(1-\alpha)y'\right)\leqslant F_{max}^{m,d}(\alpha x+(1-\alpha)y)
\]
The last inequality follows from $\alpha x'+(1-\alpha)y'\in B(\alpha x+(1-\alpha)y,d)$.
It is now proven that $F^{m+1}(x)$ is convex, as it is a sum of a linear function and a convex function $F_{max}^{m,d}(y)$.
\end{proof}
\begin{lemma}
For each $m$ the function $F^{m}$ is piecewise linear on $m$ intervals and it holds for some point $x_m$
\[
F^{m}= a_{m}x+\begin{cases}
F^{m-1}(x+d_{m-1}) & \mbox{on \ensuremath{[-1,x_m-d_{m-1}]}}\\
F^{m-1}(x_m) & \mbox{on \ensuremath{[x_m-d_{m-1},x_m+d_{m-1}]}}\\
F^{m-1}(x-d_{m-1}) & \mbox{on \ensuremath{[x_m+d_{m-1},1]}}
\end{cases}
\]
\end{lemma}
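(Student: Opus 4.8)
\emph{Proof plan.} The plan is to induct on $m$, leaning on the concavity of $F^{m-1}$ established in the preceding lemma. The base case $m=1$ is immediate, since $F^{1}(x)=a_{1}x$ is linear and hence piecewise linear on one interval; note the displayed recursion is only asserted for $m\geq 2$, as it involves $F^{m-1}$ and $d_{m-1}$. So assume $F^{m-1}$ is concave and piecewise linear on $m-1$ intervals on $[-1,1]$. Writing $d=d_{m-1}$ and $g(x)=\sup_{[x-d,x+d]\cap[-1,1]}F^{m-1}$ (this is $F^{m-1,d}_{max}$ in the notation of the previous proof), we have $F^{m}(x)=a_{m}x+g(x)$, so everything reduces to identifying $g$.

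First I would choose $x_{m}$ to be a maximiser of $F^{m-1}$ over $[-1,1]$. Because $F^{m-1}$ is concave and piecewise linear, its set of maximisers is a closed subinterval of $[-1,1]$ whose endpoints are either breakpoints of $F^{m-1}$ or endpoints of $[-1,1]$, and I would take $x_{m}$ to be one of these. Concavity of $F^{m-1}$ together with $x_{m}$ being a global maximiser then forces $F^{m-1}$ to be nondecreasing on $[-1,x_{m}]$ and nonincreasing on $[x_{m},1]$ (a short chord argument).

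The conceptual core is then to evaluate $g$ on three ranges. For $x\in[-1,x_{m}-d]$ the window $[x-d,x+d]\cap[-1,1]$ has right endpoint $x+d\leq x_{m}$ and lies in the region where $F^{m-1}$ is nondecreasing, so the supremum is attained at $x+d$, giving $g(x)=F^{m-1}(x+d)$. Symmetrically, for $x\in[x_{m}+d,1]$ the window's left endpoint is $x-d\geq x_{m}$ and $F^{m-1}$ is nonincreasing there, so $g(x)=F^{m-1}(x-d)$. For $x\in[x_{m}-d,x_{m}+d]$ the window contains $x_{m}$ (and $x_{m}\in[-1,1]$), so $g(x)=F^{m-1}(x_{m})$, the global maximum. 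I would then check that, after intersecting with $[-1,1]$, these three ranges cover $[-1,1]$, and that the degenerate cases $x_{m}-d<-1$ or $x_{m}+d>1$ (and $d=0$) simply make the first or third range empty. Together with $F^{m}=a_{m}x+g$ this yields the displayed formula.

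Finally, for the piece count: $F^{m}$ is affine on the middle range; on $[-1,x_{m}-d]$ it equals $a_{m}x+F^{m-1}(x+d)$, a shift of the restriction of $F^{m-1}$ to a subinterval of $[-1,x_{m}]$, hence contributes at most as many linear pieces as $F^{m-1}$ has on $[-1,x_{m}]$; likewise on $[x_{m}+d,1]$. Since $x_{m}$ was chosen to be a breakpoint of $F^{m-1}$ or an endpoint of $[-1,1]$, the pieces of $F^{m-1}$ on $[-1,x_{m}]$ and on $[x_{m},1]$ together number $m-1$, so $F^{m}$ is piecewise linear on at most $(m-1)+1=m$ intervals. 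I expect the main obstacle to be precisely this bookkeeping, combined with the degenerate endpoint cases $x_{m}\in\{-1,1\}$, where one must verify that the collapse of an interval is exactly compensated in the count; everything else follows routinely from the monotonicity of a concave function on each side of its maximum. As a sanity check, specialising to $m=2$ with $F^{1}(x)=a_{1}x$ recovers $F^{2}(x)=a_{2}x+\min(|a_{1}|,a_{1}x+|a_{1}|d_{1})$.
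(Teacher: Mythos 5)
Your proposal is correct and follows essentially the same route as the paper: identify a maximiser $x_m$ of the concave piecewise-linear $F^{m-1}$, observe that the sup over the window $[x-d,x+d]\cap[-1,1]$ is attained at the right end, at $x_m$, or at the left end according to the window's position relative to $x_m$, and count linear pieces. Your version is in fact more careful than the paper's (which is quite terse and silently shifts from the statement's $m-1\to m$ indexing to $m\to m+1$), particularly in spelling out the monotonicity-on-each-side-of-the-maximum argument and the degenerate cases where $x_m\pm d$ leave $[-1,1]$.
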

\begin{proof}
$F^{1}$ is a linear function, so it can be described by its values in $\pm1$. 
Assume that $F^{m}$ can be described by at most $m+1$ points and is linear between those points.
As $F^{m}$ is concave, there exists a point $x_{m}\in[-1,1]$ such that $F^{m}(x)\leq F^{m}(x_{m})$ for every $x$. The maximum of $F^{m}$ on an interval whose both ends are smaller than $x_{m}$ is taken on its right end. Similarly, if both ends of the intervals are larger than $x_{m}$, the maximum is taken on its left end. Finally, if the interval contains $x_{m}$, the maximum is in $x_{m}$. These considerations prove the formula for $F^{m+1}$.
It follows that $F_{m+1}$ is piecewise linear and it can be described by as many points as $\left.F^{n}\right|_{[d,1-d]}$ plus $1$.
\end{proof}

\subsubsection{Pseudocode}\label{flat_pseudo}
In the following code a data structure called 'funcDescription' being a set of pairs will be used to describe $F^{idx}$. Its has the following interpretation: 
\begin{enumerate}
\item $F^{idx}(-1)=leftValue$
\item if $(v,p)\in funcDescription$ then $F^{idx}(x)'=p$ for all $x$ larger than $v$ and smaller than the next value in the structure.
\end{enumerate}

The last instruction in the main loop of the pseudocode makes it inefficient to implement 'funcDescription' as a simple BST tree.

\begin{lyxcode}
\textbf{Input:~}Non-decreasing~table~of~positions\textbf{~$x\in[0,1]^{n}$,~}table~of~masses\textbf{~$a\in\mathbb{R}^{n}$}

\noun{Flat-Distance}
\begin{lyxcode}
$leftValue\leftarrow0$

$funcDescription\leftarrow\left\{ (-1,0),\,(1,-\infty)\right\} $

\textbf{for~$idx\leftarrow1$}~\textbf{to~$n$~do}
\begin{lyxcode}
$d\leftarrow x_{idx}-x_{idx-1}$

$funcLeft\leftarrow\left\{ (v-d,p):(v,p)\in funcDescription\wedge p>0\right\} $

$funcRight\leftarrow\left\{ (v+d,p):(v,p)\in funcDescription\wedge p<0\right\} $

$v_{m}\leftarrow\min\left\{ v:(v,p)\in funcRight\right\} $

$funcDescription\leftarrow funcLeft\cup\left\{ \left(v_{m}-2d,0\right)\right\} \cup funcRight$

$leftValue\leftarrow leftValue+\sum_{(v,p)\in funcDescription,\, v<1}\left(\min(\min\left\{ v':(v',\_)\in funcDescription\wedge v'>v\right\} ,-1)-v\right)\cdot p$

$(v_{min},p_{min})\leftarrow\max\left\{ (v,p):(v,p)\in funcDescription[i]\wedge v\leqslant-1\right\} $

$(v_{max},p_{max})\leftarrow\max\left\{ (v,p):(v,p)\in funcDescription[i]\wedge v\in[0,1]\right\} $

$funcDescription\leftarrow\left(funcDescription\cap\left\{ (v,p):v\in[0,1]\right\} \right)\cup\left\{ (\max(v_{min},-1),p_{min})\right\} \cup\left\{ (1,-\infty)\right\} $

$funcDescription\leftarrow\left\{ (x,p+a_{n}):(x,p)\in funcDescription\right\} $
\end{lyxcode}
\textbf{return}~$leftValue+\sum_{(v,p)\in funcDescription,\, p>0}\left(\min\left\{ v':(v',\_)\in funcDescription\wedge v'>v\right\} -v\right)\cdot p$
\end{lyxcode}
\end{lyxcode}

\subsubsection{Complexity $O(n^{2})$ of the algorithm}

As mentioned before, the complexity of this algorithm depends on the implementation of $funcDescription$
data structure. 

The simplest implementation of $funcDescription$ uses an array of
pairs $(v,p)$ sorted by $v$ in ascending order and by $p$ in the
reverse order in the same time. This is possible based on Lemma \ref{concave}.

The first block of instructions can be performed in $\Theta(\#funcDescription)$
by simply shifting all elements such that $p<0$ to the right, and
modifying $v$ by iterating over all elements of $funcDescription$.
The next block (computing of $leftValue$) can be computed with the same
complexity, as $\min\left\{ v':(v',\_)\in funcDescription\wedge v'>v\right\} $
is simply the next element after $v$ in the ordered array. Finally,
every instruction in the last block can be performed in $\Theta(\#funcDescription)$
by iterating over all its elements.

In each iteration of the main loop at most $1$ element is added to
$funcDescription$. Therefore the computational complexity of the
algorithm is $O(n^{2})$ while the memory complexity is $O(n)$.

\subsubsection{Complexity $O(n\log n)$ of the algorithm}

The previous result can be improved to $O(n\log n)$ by using balanced
binary search trees data structure.

In this implementation $funcDescription$ is represented by global
variables $p_{modifier}$ and a BST of values $(\Delta v,p)$ where
$p$ is the key. An entry $(v,p)$ in this structure is represented as
\[
\left(\sum_{(\Delta v',p')\in funcDescription\wedge p'\geqslant p}\Delta v',p\right)
\]
so obtaining an element of $funcDescription$ may take linear time. 

The advantages of this structure can be easily seen when analyzing
the first block of the code. The division of $funcDescription$ by
the value of $p$ (at first $0$) can be achieved in $O(\log n)$.
Shifting all elements of those subsets can then be done in a constant
time by modifying first elements of those sets. Adding the
extra node also requires $O(\log n)$ operations.

Setting $leftValue$ may require linear time, but all (apart from
one) visited nodes will be removed in the third block. In all iterations
of the main loop this instruction takes, therefore, $O(n)$.

Removing nodes with the first coordinate $\leqslant-1$ is obviously
done in $O(n)$ in total. Identifying nodes with the first coordinate
$\geqslant1$ might seem problematic. It is, however, known that for
the least $p$ the value of $v$ is equal to $1+d$. Relevant nodes can be,
therefore, removed from the back in $O(n)$.
Adding $a_{n}$ to the second coordinate of each node is done by 
adding it to global variable $p_{modifier}$.

All iterations of the main loop require $O(n\log n+n)$ operations.
The memory complexity is also $O(n\log n)$.

\subsubsection{Performance of the algorithm for the flat distance}

Performance of the algorithm depends on the choice of $funcDescription$
data structure. Theoretic bounds for computational complexity are,
however, not sufficient to argue about performance of these two options.
The first reason is that the each operation in $O(n^{2})$ algorithm
is much faster than in $O(n\log n)$ in terms of number of instructions.
Secondly, hardware architectures provide solutions in which iterating
over large tables is accelerated. Finally, the algorithm does reach
its theoretical bound only if many points concentrate on a small interval. A gap of size $2$ between two points completely cleans $funcDescription$
data structure. All that is shown in numerical tests. To measure performance we have used a single core of AMD Athlon II X4 605e processor clocked at 2.3Ghz with 8GB of memory. The results are presented in Figs.\ref{Fig1}-\ref{Fig2}.

\newpage

\begin{figure}[htbp]
\includegraphics[width=5.7in]{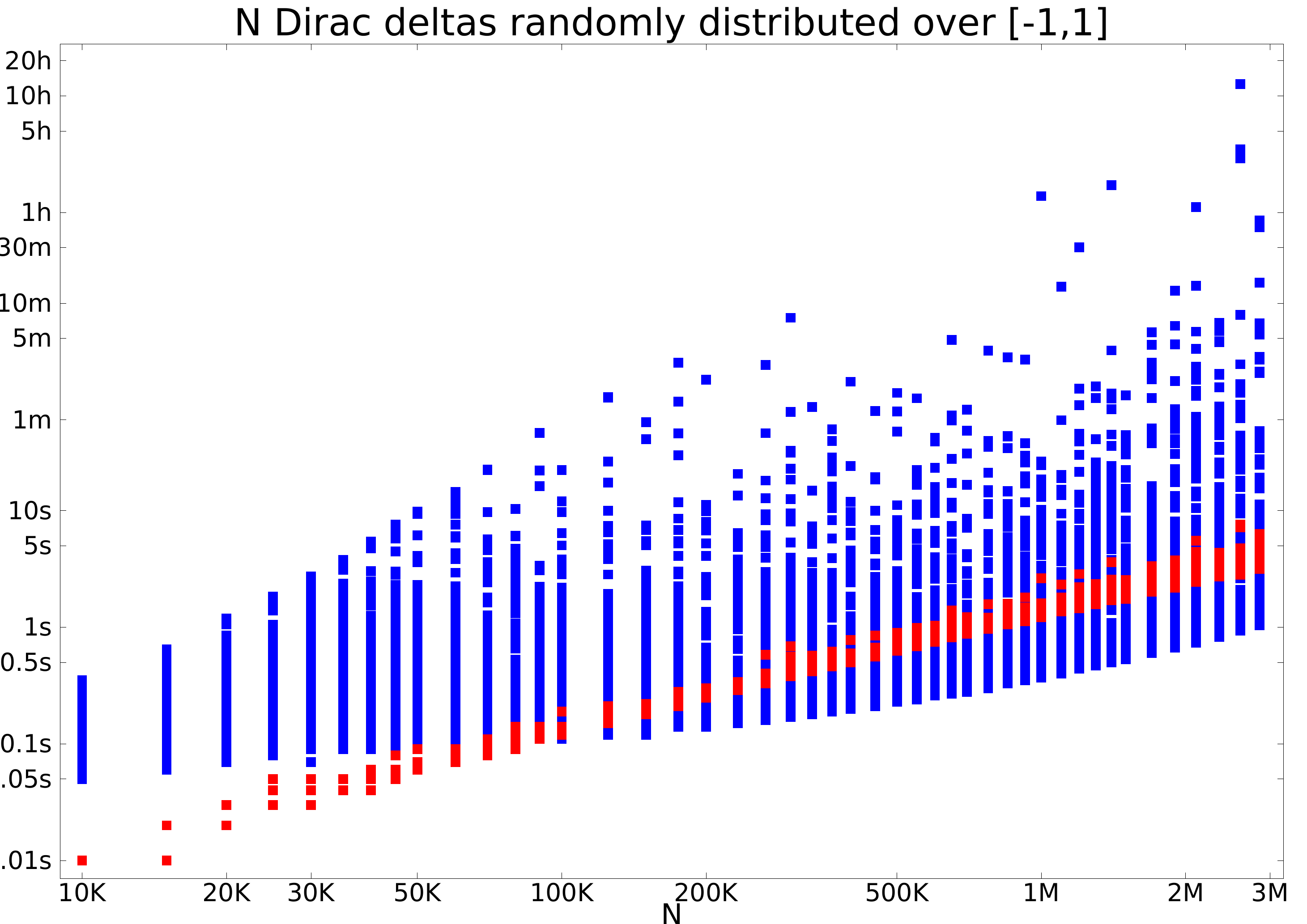}
\caption{Comparison of the performance of the two proposed algorithms for the flat distance for $N$ Dirac deltas randomly distributed over $[-1,1]$. The plot shows how the time of computation depends on $N$. For each input size 100 independent tests were executed to demonstrate how sensitive the algorithms are to input data distribution. Results of $O(nlogn)$ algorithm are depicted as red dots, and results of $O(n^2)$ algorithm as blue dots.}\label{Fig1}
  \end{figure}
  \begin{figure}[htbp]
\includegraphics[width=5.7in]{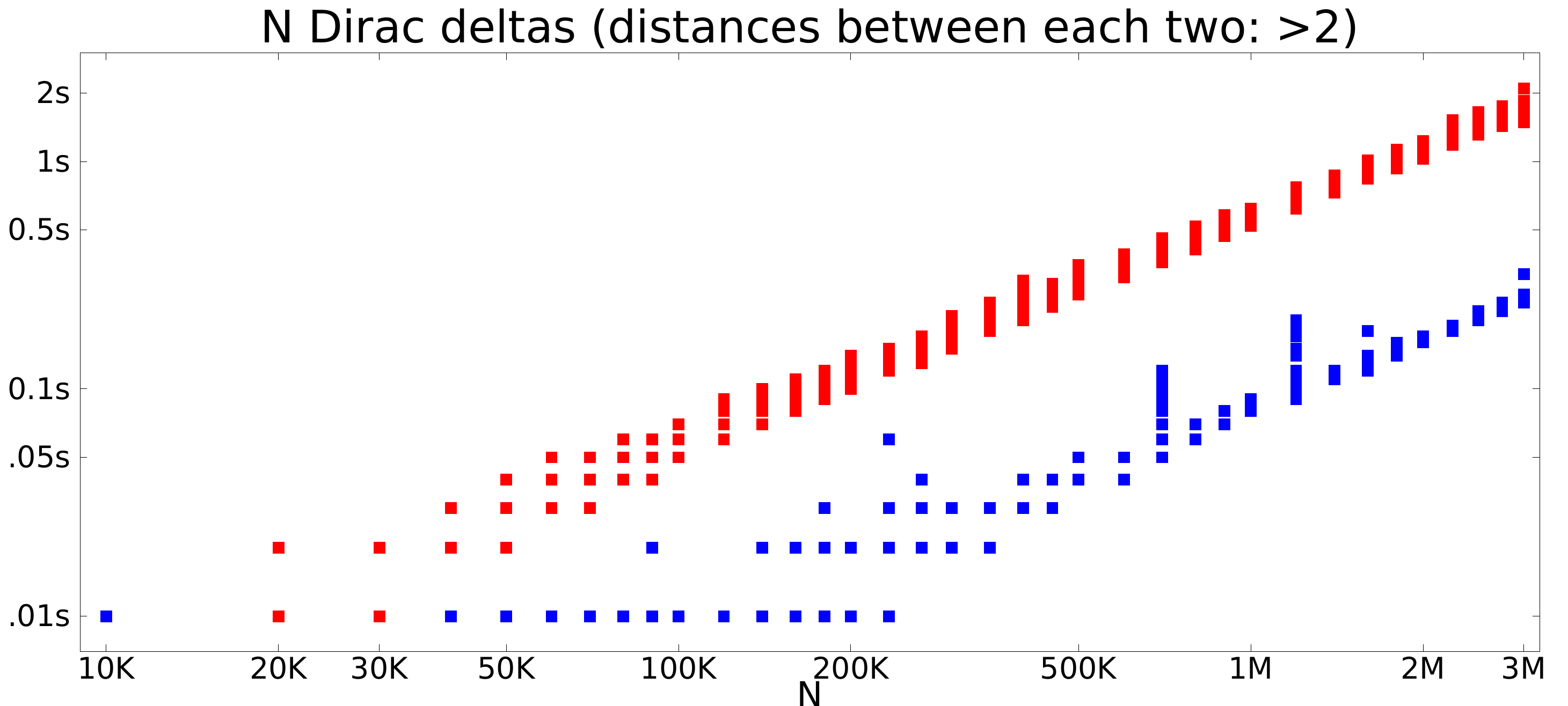}
\caption{Comparison of the performance of the two proposed algorithms for the flat distance for $N$ Dirac deltas with a distributed over a large domain, i.e. distance between each two masses is larger than $2$. In this case both algorithms are in fact linear, as the funcDescription structure has at most two elements. The plot demonstrates the overhead of using BST structures. Results of $O(nlogn)$ algorithm are depicted as red dots, and results of $O(n^2)$ algorithm as blue dots.}\label{Fig2}
  \end{figure}

\newpage
\noindent \textbf{Acknowledgments} \\
JJ was supported by the International Ph.D. Projects Program of Foundation
for Polish Science operated within the Innovative Economy Operational Program
2009-2015 funded by European Regional Development Fund (Ph.D. Program:
Mathematical Methods in Natural Sciences). AM-C was supported by the ERC Starting Grant "Biostruct" No. 210680 and the Emmy Noether Program of the German Research Council (DFG).

\end{document}